\documentclass[12pt,reqno]{amsart}
\usepackage{graphicx}
\vfuzz2pt 
\hfuzz2pt 
\newtheorem{thm}{Theorem}[section]

\theoremstyle{definition}

\theoremstyle{remark}
\newtheorem{rem}[thm]{Remark}
\numberwithin{equation}{section}

\begin{document}
\title[ the inverse of the polygamma functions]{Inequalities for the inverses of the polygamma functions}%
\author{necdet batir}%
\address{department of mathematics\\
faculty of sciences and arts\\
nev{\c{s}}eh{\i}r hac{\i} bekta{\c{s}} veli university, nev{\c{s}}eh\i r, turkey}
\email{nbatir@hotmail.com}%
\email{nbatir@nevsehir.edu.tr}
\subjclass[2000]{Primary: 33B15; Secondary: 26D07.}
\keywords{inverse of digamma function, mean value theorem, gamma function, polygamma functions, inequalities.}%
\begin{abstract}
We provide an elementary proof of the left side inequality and improve the right inequality in
\begin{align*}
\bigg[\frac{n!}{x-(x^{-1/n}+\alpha)^{-n}}\bigg]^{\frac{1}{n+1}}&<((-1)^{n-1}\psi^{(n)})^{-1}(x)\nonumber\\
&<\bigg[\frac{n!}{x-(x^{-1/n}+\beta)^{-n}}\bigg]^{\frac{1}{n+1}},
\end{align*}
where $\alpha=[(n-1)!]^{-1/n}$ and $\beta=[n!\zeta(n+1)]^{-1/n}$, which was proved in \cite{6}, and we   prove the following inequalities for the inverse of the digamma function $\psi$.
\begin{equation*}
\frac{1}{\log(1+e^{-x})}<\psi^{-1}(x)< e^{x}+\frac{1}{2}, \quad x\in\mathbb{R}.
\end{equation*}
The proofs are based on  nice applications of the mean value theorem for differentiation and elementary properties of the polygamma functions.
\end{abstract}
\maketitle
\section{introduction}
As it is well known for a positive real number $x$ the gamma function $\Gamma$ is defined to be
$$
\Gamma (x)=\int_0^\infty {u^{x - 1}e^{ - u}du}.
$$
It is a common knowledge that the gamma function plays a special role in the theory of special functions. The most important function related to the gamma function is the digamma or psi function $\psi(x)$, which is defined by  logarithmic derivative of the gamma function $\Gamma(x)$, that is,
$$
\psi(x)=\frac{\Gamma'(x)}{\Gamma(x)}.
$$
The digamma function $\psi$ is closely related with the Euler-Mascheroni constant $\gamma$ $(=0.57721...)$ and harmonic numbers $H_n=1+\frac{1}{2}+\cdots+\frac{1}{n}$. They satisfy $\psi(n+1)=-\gamma+H_n$  $(n\in\mathbb{N})$. In \cite{14} it is proved that
$\psi(p_n) \sim \log n$, when $n\sim\infty$, is equivalent with the Prime Number Theorem, where $p_n$ is $n$ th prime number. Functions $\psi'(x),\psi''(x),\psi'''(x),...$ are called polygamma functions in the literature. Polygamma functions are also very important functions and appear in the evaluations of many series and integrals \cite{2,9,12,13,16,18}. They are also related with many special functions such as the Riemann zeta function, Hurwitz zeta function, Clausen's function and generalized harmonic numbers. There exists a huge literature on inequalities for the digamma and polygamma functions, see \cite{3,4,6,7,10,11,15},  but for their inverses almost no inequality exists. The only known such an inequality is the following one which is  due to the author \cite[Theorem 2.5]{6}.
\begin{align}\label{e:1}
\bigg[\frac{n!}{x-(x^{-1/n}+\alpha)^{-n}}\bigg]^{\frac{1}{n+1}}&<\left((-1)^{n-1}\psi^{(n)}\right)^{-1}(x)\nonumber\\
&<\bigg[\frac{n!}{x-(x^{-1/n}+\beta)^{-n}}\bigg]^{\frac{1}{n+1}},
\end{align}
where $\alpha=[(n-1)!]^{-1/n}$ and $\beta=[n!\zeta(n+1)]^{-1/n}$.
Our first aim in this paper is to give an elementary proof of the left side of this inequality and to improve its right inequality.

Our second aim is to establish, through the use of elementary properties of polygamma functions and the mean value theorem, simple bounds for  the inverse of the digamma function $\psi$. Numerical experiments show that our  bounds are  are remarkably accurate for all $x\in\mathbb{R}$. In our proof we make use of the following relations for the gamma and  polygamma functions.
The gamma function satisfies the functional equation $\Gamma(x+1)=x\Gamma(x)$, and has the following canonical product representation
\begin{equation*}
\Gamma(x+1)=e^{-\gamma x}\prod_{k=1}^{\infty}\frac{k}{k+x}e^{x/k}\quad  x>-1,
\end{equation*}
where $\gamma=0.57721...$ is Euler-Mascheroni constant; see \cite[pg.346]{18}.
Taking logarithm of both sides of this formula, we obtain for $x>-1$
\begin{equation}\label{e:2}
\log\Gamma(x+1)=-\gamma x+\sum\limits_{k=1}^\infty\left[\frac{x}{k}-\log(x+k)+\log k\right].
\end{equation}
Differentiation gives
\begin{equation}\label{e:3}
\psi(x+1)=-\gamma+\sum\limits_{k=1}^\infty\left[\frac{1}{k}-\frac{1}{k+x}\right] \quad x>-1.
\end{equation}
For $x>0$ and $n=1,2,...$
\begin{equation}\label{e:4}
(-1)^{n-1}\psi^{(n)}(x)=\int\limits_{0}^\infty\frac{t^ne^{-xt}dt}{1-e^{-t}}=n!\sum\limits_{k=0}^{\infty}\frac{1}{(x+k)^{n+1}},
\end{equation}
and
\begin{equation}\label{e:5}
\psi^{(n)}(x+1)-\psi^{(n)}(x)=\frac{(-1)^nn!}{x^{n+1}}.
\end{equation}
See  \cite[p.260]{1} for these and further properties of these functions.
\section{main result}
We collect our main results in this section.
\begin{thm}For $x>0$ and $n=1,2,3,...$ we have
\begin{align}\label{e:6}
&\bigg[\frac{1}{n}\bigg(\frac{x}{(n-1)!}-\left(\left(\frac{(n-1)!}{x}\right)^{1/n}+1\right)^{-n}\bigg)\bigg]^{-\frac{1}{n+1}}\nonumber\\
&<\left((-1)^{n-1}\psi^{(n)}\right)^{-1}(x)<\left(\frac{(n-1)!}{x}\right)^{\frac{1}{n}}+\frac{1}{2}
\end{align}
\end{thm}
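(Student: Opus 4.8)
The plan is to exhibit both bounds as images, under the strictly decreasing bijection $F:=(-1)^{n-1}\psi^{(n)}$, of explicit comparison points, and then to invert. By \eqref{e:4}, $F(t)=n!\sum_{k=0}^{\infty}(t+k)^{-(n+1)}$ is positive and strictly decreasing from $(0,\infty)$ onto $(0,\infty)$, so $y:=F^{-1}(x)$ is well defined; I abbreviate $a:=\bigl((n-1)!/x\bigr)^{1/n}$, so that $x=(n-1)!\,a^{-n}$. The whole argument rests on two elementary comparisons for the convex, decreasing function $s\mapsto s^{-(n+1)}$ together with the functional equation \eqref{e:5}.

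For the (improved) right-hand inequality I would first establish the pointwise bound $F(t)<(n-1)!\,\left(t-\tfrac12\right)^{-n}$ for $t>\tfrac12$. Writing $F(t)/n!=\sum_{k\ge0}(t+k)^{-(n+1)}$ and applying the midpoint form of the Hermite--Hadamard inequality (equivalently, the mean value theorem) to the convex function on each unit interval gives $(t+k)^{-(n+1)}<\int_{t+k-1/2}^{\,t+k+1/2}s^{-(n+1)}\,ds$; summing over $k\ge0$ telescopes the right-hand side to $\int_{t-1/2}^{\infty}s^{-(n+1)}\,ds=\tfrac1n\left(t-\tfrac12\right)^{-n}$, which is the claim. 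Taking $t=a+\tfrac12$ yields $F\!\left(a+\tfrac12\right)<(n-1)!\,a^{-n}=x=F(y)$, and the monotonicity of $F$ gives $y<a+\tfrac12$.

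For the left-hand inequality I would use \eqref{e:5} in the form $F(y)-F(y+1)=n!\,y^{-(n+1)}$, i.e.\ $y=\left(n!/(x-F(y+1))\right)^{1/(n+1)}$. A short computation (using $x=(n-1)!\,a^{-n}$) rewrites the displayed lower bound as $L:=\left(n!/(x-(n-1)!\,(a+1)^{-n})\right)^{1/(n+1)}$, so that, by monotonicity of $F$, the inequality $y>L$ is equivalent to $F(y+1)>(n-1)!\,(a+1)^{-n}$. Introducing $b(t):=\left((n-1)!/F(t)\right)^{1/n}$, so that $F(t)=(n-1)!\,b(t)^{-n}$ and $a=b(y)$, this is exactly $b(y+1)<b(y)+1$. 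The midpoint comparison above, together with the elementary lower estimate $F(t)>(n-1)!\,t^{-n}$ coming from $\sum_{k\ge0}(t+k)^{-(n+1)}>\int_t^{\infty}s^{-(n+1)}\,ds$, yields $t-\tfrac12<b(t)<t$; hence it suffices to prove that $\delta(t):=t-b(t)$ is strictly increasing, for then $b(y+1)-b(y)=1+\delta(y)-\delta(y+1)<1$.

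The \emph{main obstacle} is this last monotonicity. Differentiating $F=(n-1)!\,b^{-n}$ converts $b'(t)<1$ into the polygamma inequality $-F'(t)<n\,((n-1)!)^{-1/n}F(t)^{(n+1)/n}$, which for $n=1$ is precisely the classical estimate $(\psi'(t))^2+\psi''(t)>0$. I plan to attack the general case through the integral representation \eqref{e:4}, which displays $F(t)$ and $-F'(t)=\int_0^{\infty}u^{n+1}e^{-tu}(1-e^{-u})^{-1}\,du$ as consecutive moments of the positive measure $e^{-tu}(1-e^{-u})^{-1}\,du$; the required bound is a genuine inequality between these two moments---necessarily finer than plain moment log-convexity, which points the wrong way---and I expect verifying it (with the known $n=1$ case as anchor) to be where the real work lies. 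Once $\delta$ is shown to be increasing, the left-hand inequality follows immediately.
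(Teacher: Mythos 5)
Your proof of the right-hand inequality is complete, correct, and genuinely more direct than the paper's: the midpoint (Hermite--Hadamard) bound $(t+k)^{-(n+1)}<\int_{t+k-1/2}^{t+k+1/2}s^{-(n+1)}\,ds$, summed and telescoped to $F(t)<(n-1)!\,(t-\tfrac12)^{-n}$, replaces the paper's derivation of the same estimate as the limiting value $\epsilon(\infty)=t+\tfrac12$ of a sequence of mean-value points. Your reduction of the left-hand inequality to the single statement $b(y+1)<b(y)+1$ is also correct.

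The problem is that the left-hand half is then not proved: you rest it on the monotonicity of $\delta(t)=t-b(t)$, equivalently on the inequality $(-1)^{n}\psi^{(n+1)}(t)<n\,\bigl((n-1)!\bigr)^{-1/n}\bigl((-1)^{n-1}\psi^{(n)}(t)\bigr)^{(n+1)/n}$, and you explicitly leave this as a plan of attack rather than an argument. That step is the entire analytic content of the lower bound --- your envelope $t-\tfrac12<b(t)<t$ yields only $b(y+1)<y+1<b(y)+\tfrac32$, and, as you yourself note, log-convexity of the moments points the wrong way --- so without it there is no proof. It is also exactly the difficulty the paper's construction is built to sidestep: applying the mean value theorem term by term to the telescoping identity $\frac{1}{t^{n}}=\sum_{k\ge0}\bigl[(k+t)^{-n}-(k+t+1)^{-n}\bigr]$ produces intermediate points $\epsilon(k)\in(t,t+1)$ with $\frac{1}{nt^{n}}=\sum_{k\ge0}(k+\epsilon(k))^{-(n+1)}$, and the monotonicity the paper needs is in the \emph{index} $k$ for fixed $t$; that reduces to the elementary comparison $S_{-(n+1)}(u,u+1)<S_{-n}(u,u+1)$ of generalized means of power functions, with no polygamma function involved. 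Summing $\epsilon(k)>\epsilon(0)$ over $k$ gives $(n-1)!/t^{n}<F(\epsilon(0))$, which is precisely your target $F(y+1)>(n-1)!\,(a+1)^{-n}$ in disguise. To complete your argument you must either actually prove the displayed polygamma inequality for all $n$ (for $n=1$ it is the classical $\psi''+(\psi')^{2}>0$, but the general case requires a real argument) or trade your monotonicity in $t$ for the paper's elementary monotonicity in $k$.
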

\begin{proof}
By (\ref{e:5}) we have for $n=1,2,3,...$
\begin{equation}\label{e:7}
\frac{(-1)^{n-1}(n-1)!}{x^{n}}=\psi^{(n-1)}(x+1)-\psi^{(n-1)}(x).
\end{equation}
Using  (\ref{e:4}), we get for $t>0$
\begin{equation}\label{e:8}
-\frac{1}{t^{n}}=\sum\limits_{k=0}^\infty\bigg[\frac{1}{(k+t+1)^{n}}-\frac{1}{(k+t)^n}\bigg].
\end{equation}
By mean value theorem for differentiation we have
\begin{equation}\label{e:9}
\frac{1}{(k+t+1)^{n}}-\frac{1}{(k+t)^n}=\frac{-n}{(k+\epsilon(k))^{n+1}},\quad t<\epsilon(k)<t+1.
\end{equation}
We therefore can write (\ref{e:8}) as following
\begin{equation}\label{e:10}
\frac{1}{nt^{n}}=\sum\limits_{k=0}^\infty\frac{1}{(k+\epsilon(k))^{n+1}}.
\end{equation}
From (\ref{e:9}) we have
\begin{equation}\label{e:11}
\epsilon(k)=\left[\frac{1}{n}\left(\frac{1}{(k+t)^{n}}-\frac{1}{(k+t+1)^{n}}\right)\right]^{-\frac{1}{n+1}}-k.
\end{equation}
We want to show that $\epsilon$ is strictly increasing on $(0,\infty)$. For this purpose we define
\begin{equation*}
f(u)=\left[\frac{1}{n}\left(\frac{1}{u^n}-\frac{1}{(u+1)^n}\right)\right]-u.
\end{equation*}
Clearly, $\epsilon'(k)>0$ is equivalent to $f'(u)>0$.
Differentiation gives
\begin{equation*}
f'(u)=\frac{1}{n+1}\left(u^{-n}-(u+1)^{-n}\right)\left(\frac{u^{-n}-(u+1)^{-n}}{n}\right)^{-\frac{n+2}{n+1}}-1.
\end{equation*}
We conclude that $\epsilon$ is strictly increasing if and only if
\begin{equation}\label{e:12}
\left[\frac{u^{-(n+1)}-(u+1)^{-(n+1)}}{n+1}\right]^{-\frac{1}{n+2}}<\left[\frac{u^{-n}-(u+1)^{-n}}{n}\right]^{-\frac{1}{n+1}},
\end{equation}
which follows from the fact that the generalized mean
$$
S_p(a,b)=\left[\frac{a^p-b^p}{p(a-b)}\right]^{1/(p-1)}
$$
is strictly increasing in $p$; see \cite[pg. 234]{17}. Indeed (\ref{e:12}) is equivalent to $S_{-(n+1)}(u,u+1)<S_{-n}(u,u+1)$.
Using (\ref{e:11}) we get
\begin{equation*}
\lim\limits_{k\to\infty}\epsilon(k)=t+\lim\limits_{u\to\infty}\bigg[\frac{1}{n}\left(u^{-n}-(u+1)^{-n}\right)\bigg]^{-\frac{1}{n+1}}-u.
\end{equation*}
In \cite[Lemma 1.4]{6} with $(t,s)=(1,0)$ this limit is evaluated and equals to 1/2. Thus,
\begin{equation}\label{e:13}
\epsilon(\infty):=\lim\limits_{k\to\infty}\epsilon(k)=t+\frac{1}{2}.
\end{equation}
By (\ref{e:11}) it is clear that
\begin{equation}\label{e:14}
\epsilon(0)=\bigg[\frac{1}{n}\left(t^{-n}-(t+1)^{-n}\right)\bigg]^{-\frac{1}{n+1}}.
\end{equation}
From the fact that $\epsilon$ is strictly increasing  we conclude from (\ref{e:10}) that
\begin{equation*}
\sum\limits_{k=0}^\infty\frac{1}{(k+\epsilon(\infty))^{n+1}}<\frac{1}{nt^n}<\sum\limits_{k=0}^\infty\frac{1}{(k+\epsilon(0))^{n+1}}
\end{equation*}
or taking into account (\ref{e:4})
\begin{equation}\label{e:15}
(-1)^{n-1}\psi^{(n)}(\epsilon(\infty))<\frac{(n-1)!}{t^n}<(-1)^{n-1}\psi^{(n)}(\epsilon(0)).
\end{equation}
Since the mapping $t\to(-1)^{n-1}\psi^{(n)}(t)$ is strictly decreasing, applying the inverse of this function to both sides of (\ref{e:15}) and using (\ref{e:13}) and (\ref{e:14}) we get

\begin{align}\label{e:16}
&\bigg[\frac{1}{n}\bigg(\frac{t}{(n-1)!}-\left(\left(\frac{(n-1)!}{t}\right)^{1/n}+1\right)\bigg]^{-\frac{1}{n+1}}\nonumber\\
&<\left((-1)^{n-1}\psi^{(n)}\right)^{-1}\left(\frac{(n-1)!}{t^n}\right)<t+\frac{1}{2}
\end{align}
where $[(-1)^{n-1}\psi^{(n)}]^{-1}$ is the iverse of the mapping $t\to(-1)^{n-1}\psi^{(n)}(t)$.
Setting $x=\frac{(n-1)!}{t^n}$ here we get the desired result (\ref{e:6}).
\end{proof}
\begin{rem}
Numerical computations show that the upper bound given in (\ref{e:6}) is much accurate than that of (\ref{e:1}).
\end{rem}
\begin{thm} For $x\in\mathbb{R}$ we have
\begin{equation}\label{e:17}
\frac{1}{\log(1+e^{-x})}<\psi^{-1}(x)< e^{x}+\frac{1}{2}.
\end{equation}
\end{thm}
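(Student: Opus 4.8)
The plan is to prove the two inequalities separately, in each case reducing the statement about $\psi^{-1}$ to a statement about $\psi$ itself by applying the monotone function $\psi$ (which is strictly increasing on $(0,\infty)$). Since $\psi^{-1}(x)>0$, writing $y=\psi^{-1}(x)$ we have $x=\psi(y)$, and because $\psi$ is increasing, the claimed bounds $\tfrac{1}{\log(1+e^{-x})}<y<e^x+\tfrac12$ are equivalent to the reversed-direction statements obtained by feeding each bound into $\psi$ and comparing with $x=\psi(y)$. Concretely, I would show that for all $t>0$,
\begin{equation*}
\psi\!\left(\frac{1}{\log(1+e^{-\psi(t)})}\right)<\psi(t)<\psi\!\left(e^{\psi(t)}+\frac12\right),
\end{equation*}
or equivalently prove the two clean auxiliary inequalities
\begin{equation}\label{plan:aux}
\log\!\left(1+e^{-\psi(t)}\right)>\frac1t
\qquad\text{and}\qquad
\psi\!\left(e^{\psi(t)}+\tfrac12\right)>\psi(t),
\end{equation}
after substituting $x=\psi(t)$ and using monotonicity; the first of these, once exponentiated, should be recast as a bound on $e^{\psi(t)}$.

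The key analytic input I would reach for is a sharp two-sided estimate for $\psi(x)$ in terms of elementary functions, of the type $\log(x+c_1)<\psi(x)<\log(x+c_2)$ or more precisely the classical bounds
\begin{equation*}
\log\!\left(x-\tfrac12\right)<\psi(x)<\log\!\left(x\right)\quad(x>\tfrac12),
\qquad \psi(x)<\log x-\frac{1}{2x},
\end{equation*}
which follow from \eqref{e:4} together with comparison of the series to integrals, or can be quoted from the literature on $\psi$. From such bounds I expect $e^{\psi(t)}$ to be trapped between roughly $t-\tfrac12$ and $t$, so that $e^{\psi(t)}+\tfrac12$ exceeds $t$, giving the right-hand inequality in \eqref{plan:aux} immediately by monotonicity of $\psi$. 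For the left-hand side I would use the lower estimate for $\psi$ to show $e^{-\psi(t)}>e^{-\log t}=1/t$ is \emph{not} quite enough, so I would instead invoke the stronger companion inequality $\psi(x)>\log(x)-1/x$ (or the refined $\log(x+1/2)$ bound) to control $\log(1+e^{-\psi(t)})$ from below by $1/t$; the functional equation \eqref{e:3} and the product/series expansion for $\psi$ give exactly the slack needed.

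The main obstacle I anticipate is the left-hand inequality: the quantity $\log(1+e^{-\psi(t)})$ is delicate because near $t\to\infty$ both $e^{-\psi(t)}\to0$ and $1/t\to0$ at comparable rates, so a crude bound on $\psi$ will not separate them. The plan there is to introduce the single-variable function
\begin{equation*}
g(t)=\frac{1}{\log\!\left(1+e^{-\psi(t)}\right)}-t
\end{equation*}
and show $g(t)<0$ on $(0,\infty)$ by analyzing its monotonicity and limiting behavior: I would compute $g'$, reduce the sign question to an inequality involving $\psi'(t)$ and $e^{-\psi(t)}$, and use the standard bounds $\psi'(x)<\tfrac1x+\tfrac{1}{2x^2}$ together with the asymptotics $\psi(t)\sim\log t$ as $t\to\infty$ to pin down the boundary values $g(0^+)$ and $g(\infty)$. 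If the direct monotonicity argument proves unwieldy, the fallback is to exponentiate the target inequality and instead prove the equivalent elementary estimate $e^{\psi(t)}<\big(e^{1/t}-1\big)^{-1}$ for all $t>0$, which can be attacked by the same mean-value-theorem technique used in the first theorem, comparing the increment $\psi(t+1)-\psi(t)=1/t$ with a logarithmic increment via \eqref{e:3}.
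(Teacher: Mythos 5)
Your reduction is sound and in fact coincides with the paper's: writing $t=\psi^{-1}(x)$, the two sides of \eqref{e:17} are equivalent to $e^{\psi(t)}+\tfrac12>t$ and $e^{\psi(t)}<(e^{1/t}-1)^{-1}$ for $t>0$, i.e.\ to $\psi\big(1/\log(1+1/u)\big)<\log u<\psi\big(u+\tfrac12\big)$, which is exactly the inequality the paper establishes before substituting $u=e^{x}$. The gap is that your plan never actually proves either auxiliary inequality, and these two inequalities \emph{are} the theorem. The right-hand one is precisely the sharp bound $\psi(t)>\log(t-\tfrac12)$; this does not follow from comparing the series in \eqref{e:4} with integrals --- that route only yields $\psi(t)>\log t-\tfrac1t$, and $\log t-\tfrac1t<\log(t-\tfrac12)$ for all $t\ge 1$, so the crude bound points the wrong way. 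The constant $\tfrac12$ is best possible (it is the limit of $t-e^{\psi(t)}$), so you would have to import this as a black-box result from the literature, which leaves essentially nothing proved here.

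The left-hand inequality is where the plan actually breaks. First, the bound you propose to use, $\psi'(x)<\tfrac1x+\tfrac{1}{2x^2}$, is false: the paper's own identity \eqref{e:23} shows $\psi'(x)-\tfrac1x-\tfrac{1}{2x^2}>0$ for all $x>0$. Second, your function $g(t)=\frac{1}{\log(1+e^{-\psi(t)})}-t$ tends to $0$ both as $t\to0^{+}$ (where $\psi(t)\sim-1/t$) and as $t\to\infty$, so $g$ is not monotone on $(0,\infty)$ and ``monotonicity plus boundary values'' cannot yield $g<0$. Your fallback reformulation $e^{\psi(t)}<(e^{1/t}-1)^{-1}$ is correct; multiplying out and using $\psi(t+1)=\psi(t)+1/t$ shows it is equivalent to $e^{\psi(t+1)}-e^{\psi(t)}<1$, which by the mean value theorem would follow from $\psi'(s)e^{\psi(s)}<1$ --- again exactly the nontrivial sharp inequality you have not proved. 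By contrast, the paper avoids quoting any such result: it writes $\log x=\psi(x+\xi(x))$ via the mean value theorem, proves that $\xi$ is strictly increasing with limit $\tfrac12$ (the real work being the lemma $\psi''+(\psi')^2>0$, established through \eqref{e:23}), and reads off \emph{both} bounds at once from $\xi(x)<\tfrac12$ and $x+\xi(x)>1/\log(1+1/x)$; its second proof does the same with the discrete parameter $\phi(k)$ and the geometric--logarithmic mean inequality. To complete your argument you must either carry out such a construction or supply genuine proofs of the two sharp inequalities your plan presupposes.
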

\begin{proof} We want to give two different proofs.\\

\textbf{\textit{First proof.}} Applying the mean value theorem to $\log (\Gamma(t))$ on $[x,x+1]$ and using the functional equation $\Gamma(x+1)=x\Gamma(x)$ for the gamma function, we obtain
\begin{equation}\label{e:18}
\log x=\psi(x+\xi(x)),\quad 0<\xi(x)<1.
\end{equation}
We want to show that $\xi$ is strictly increasing on $(0,\infty)$. Differentiation gives
\begin{equation}\label{e:19}
\frac{1}{x}=(1+\xi'(x))\psi'(x+\xi(x))
\end{equation}
and
\begin{equation}\label{e:20}
-\frac{1}{x^2}=\xi''(x)\psi'(x+\xi(x))+(1+\xi'(x))^2\psi''(x+\xi(x)).
\end{equation}
By (\ref{e:19}) we have
$$
(1+\xi'(x))^2=\frac{1}{x^2\psi'(x+\xi(x))^2}.
$$
Substituting this into (\ref{e:20}) gives
\begin{equation}\label{e:21}
-x^2\xi''(x)\left[\psi'(x+\xi(x))\right]^3=\psi''(x+\xi(x))+\left[\psi'(x+\xi(x))\right]^2.
\end{equation}
We now show that the right hand side of this identity is positive. Let's define for $x>0$
$$
g(x)=\psi''(x)+\left(\psi'(x)\right)^2.
$$
Applying the recurrence relations
\begin{equation*}
\psi'(x+1)-\psi'(x)=-\frac{1}{x^2}\quad \mbox{and}\quad \psi''(x+1)-\psi''(x)=\frac{2}{x^3},
\end{equation*}
which follows from (\ref{e:5}), we obtain for positive $x$
\begin{equation}\label{e:22}
g(x)-g(x+1)=\frac{2}{x^2}\left(\psi'(x)-\frac{1}{x}-\frac{1}{2x^2}\right).
\end{equation}
Using
\begin{equation*}
\frac{1}{x}=\int\limits_{0}^\infty e^{-xt}dt,\quad \frac{1}{x^2}=\int\limits_{0}^\infty te^{-xt}dt\quad \mbox{and}\quad \psi'(x)=\int\limits_{0}^\infty \frac{te^{-xt}}{1-e^{-t}}dt,
\end{equation*}
we find that
\begin{equation}\label{e:23}
\psi'(x)-\frac{1}{x}-\frac{1}{2x^2}=\int\limits_{0}^\infty \frac{\alpha(t)e^{-xt}}{e^t-1}dt
\end{equation}
where $\alpha(t)=t+2+(t-2)e^t$. Since $\alpha(0)=\alpha'(0)=0$ and $\alpha''(t)=te^t>0$, we get $\alpha(t)>0$ for all $t>0$. Thus, the left side of (\ref{e:23}) is positive. This fact and (\ref{e:22}) together  imply   for $x>0$ that $g(x)-g(x+1)>0$. This reveals that
$$
g(x)>g(x+1)>g(x+2)>\cdots>f(x+n)>\lim\limits_{n\to\infty}g(x+n)=0.
$$
Since $\psi'(x)>0$ for $x>0$, we conclude from (\ref{e:21}) that $\xi''(x)<0$. That is, $\xi'$ is strictly decreasing on $(0,\infty)$. Since $\psi'$ is strictly decreasing and   $0<\xi(x)<1$,  (\ref{e:19}) yields
\begin{equation}\label{e:24}
\frac{1}{x\psi'(x)}-1<\xi'(x)<\frac{1}{x\psi'(x+1)}-1.
\end{equation}
Using the asymptotic expansion
\begin{equation*}
\psi'(x)\sim\frac{1}{x}+\frac{1}{2x^2}+\frac{1}{2x^3}+\cdots
\end{equation*}
see \cite[pg 260; 6.4.12]{1} we see that the limits of  both bounds in (\ref{e:24}) tend to 0 as $x$ goes to infinity, that is,  $\lim\limits_{x\to\infty}\xi'(x)=0$. We therefore have $\xi'(x)>\lim\limits_{x\to\infty}\xi'(x)=0$. Therefore, $\xi$ is a monotonic increasing  function of $x$ on $(0, \infty)$. Replacing $x$ by $x+1$ in (\ref{e:18}), and using the fact that both $\psi$ and $\xi$ are strictly increasing on $(0,\infty)$ we get
\begin{equation}\label{e:25}
\log (x+1)=\psi(x+\xi(x+1)+1)>\psi(x+\xi(x)+1).
\end{equation}
Employing the well known recurrence relation
\begin{equation}\label{e:26}
\psi(x+1)-\psi(x)=\frac{1}{x},
\end{equation}
we obtain from (\ref{e:25}) and (\ref{e:18})
\begin{equation*}
\log(x+1)>\frac{1}{x+\xi(x)}+\psi(x+\xi(x))=\frac{1}{x+\xi(x)}+\log x
\end{equation*}
or
\begin{equation*}
x+\xi(x)>\frac{1}{\log(1+1/x)}.
\end{equation*}
Applying $\psi$ both sides, this becomes by (\ref{e:18})
\begin{equation}\label{e:27}
\log x=\psi(x+\xi(x))>\psi\left(\frac{1}{\log(1+1/x)}\right).
\end{equation}
Replacing $x$ by $x+1$ in (\ref{e:18}) and using the relation (\ref{e:26}) we get that
\begin{equation}\label{e:28}
\xi(x+1)=\frac{1}{\log(x+1)-\psi(x+\xi(x+1))}-x.
\end{equation}
 Since $\xi$ is bounded and strictly increasing it has a limit as $x$ approaches to infinity. We therefore conclude from (\ref{e:18}) and (\ref{e:28}) that
\begin{align*}
\lim\limits_{x\to\infty}\xi(x+1)=&\lim\limits_{x\to\infty}\left[\frac{1}{\log(x+1)-\psi(x+\xi(x+1))}-x\right]\nonumber\\
=&\lim\limits_{x\to\infty}\left[\frac{1}{\log(x+1)-\psi(x+\xi(x))}-x\right]\nonumber\\
=&\lim\limits_{x\to\infty}\left[\frac{1}{\log(1+1/x)}-x\right]=\frac{1}{2}.
\end{align*}
So monotonic increase of $\xi$ and $\psi$  and Equation (\ref{e:28}) imply that
\begin{equation}\label{e:29}
\log x=\psi(x+\xi(x))<\psi\left(x+\lim\limits_{x\to\infty}\xi(x)\right)=\psi\left(x+\frac{1}{2}\right).
\end{equation}
Combining (\ref{e:27}) and (\ref{e:29}) it follows that
\begin{equation*}
\psi\left(\frac{1}{\log(1+1/x)}\right)<\log x<\psi\left(x+\frac{1}{2}\right).
\end{equation*}
The desired inequality (\ref{e:6}) now follows from replacing $x$ by $e^x$ here, after applying $\psi^{-1}$ both sides.\\

\textbf{\textit{Second proof.}} Utilizing the functional equation $\Gamma(x+1)=x\Gamma(x)$ in (\ref{e:4}) and using (\ref{e:2}), we obtain
\begin{align}\label{e:30}
\log x&=\log\Gamma(x+1)-\log\Gamma(x)\nonumber\\
&=-\gamma x+\sum\limits_{k=1}^\infty\bigg[\frac{x}{k}-\log(x+k)+\log k\bigg]\nonumber \\
&+\gamma (x-1)-\sum\limits_{k=1}^\infty\bigg[\frac{x-1}{k}-\log(x+k-1)+\log k\bigg]\nonumber\\
&=-\gamma+\sum\limits_{k=1}^\infty\bigg[\frac{1}{k}-\log(x+k)+\log(x+k-1)\bigg].
\end{align}
By mean value theorem we get
\begin{equation}\label{e:31}
\log(x+k)-\log(x+k-1)=\frac{1}{k+\phi(k)},\quad x-1<\phi(k)<x.
\end{equation}
So (\ref{e:30}) becames
\begin{equation}\label{e:32}
\log x=-\gamma +\sum\limits_{k=1}^\infty\bigg[\frac{1}{k}-\frac{1}{k+\phi(k)}\bigg]
\end{equation}
It is clear from (\ref{e:27})  that
\begin{equation}\label{e:33}
\phi(k)=\frac{1}{\log\bigg(1+\frac{1}{x+k-1}\bigg)}-k,
\end{equation}
and
\begin{equation}\label{e:34}
\phi(1)=\frac{1}{\log\bigg(1+\frac{1}{x}\bigg)}-1.
\end{equation}
We can easily compute that
\begin{equation}\label{e:35}
\phi(\infty):=\lim\limits_{k\to\infty}\phi(k)=x-\frac{1}{2}.
\end{equation}
We want to show that $\phi$ is strictly increasing on $[1,\infty)$. Setting $u=x+k-1$ in (\ref{e:33}) its right hand side becomes
\begin{equation*}
x-1+\frac{1}{\log\bigg(1+\frac{1}{u}\bigg)}-u=g(u),\,\mbox{say}.
\end{equation*}
So in order to prove that $\phi$ is strictly increasing on $[1,\infty)$, it suffices to show that $g$ is strictly increasing on $(0,\infty.)$ If we differentiate $g$ and apply the well known geometric-logarithmic mean inequality $G\leq L$ we see that $g'(u)>0$, which implies that $\phi$ is strictly increasing on $[1,\infty)$. Hence we conclude from (\ref{e:32}) that
\begin{equation*}
-\gamma+\sum\limits_{k=1}^\infty\left(\frac{1}{k}-\frac{1}{k+\phi(1)}\right)\leq\log x \leq-\gamma+\sum\limits_{k=1}^\infty\left(\frac{1}{k}-\frac{1}{k+\phi(\infty)}\right).
\end{equation*}
Using (\ref{e:32}), (\ref{e:33}) and (\ref{e:3}) we obtain
\begin{equation*}
\psi\left(\frac{1}{\log(1+1/x)}\right)\leq\log x\leq \psi\left(x+\frac{1}{2}\right).
\end{equation*}
Since $\psi^{-1}$ is strictly increasing on $(-\infty,\infty)$, applying $\psi^{-1}$ both sides of this inequalities we get
\begin{equation*}
\frac{1}{\log(1+1/x)}\leq\psi^{-1}(\log x)\leq x+\frac{1}{2}.
\end{equation*}
Replacing $x$ by  $e^x$ here completes the proof of Theorem 2.1.
\end{proof}
\bibliographystyle{amsplain}

\end{document}